\renewcommand{\@seccntformat}[1]{{\csname the#1\endcsname}.\hspace{.5em}}
\newtheorem{thm}{Theorem}[section]
\newtheorem{conj}[thm]{Conjecture}
\newtheorem{lem}[thm]{Lemma}
\renewcommand{\thefootnote}{*}
\numberwithin{equation}{section}
\begin{document}

\begin{center}
{\large\bf $q$-Analogues of Dwork-type supercongruences}
\end{center}

\vskip 2mm \centerline{Victor J. W. Guo}
\begin{center}
{\footnotesize School of Mathematics and Statistics, Huaiyin Normal
University, Huai'an 223300, Jiangsu, People's Republic of China\\
{\tt jwguo@hytc.edu.cn }  }
\end{center}


\vskip 0.7cm \noindent{\bf Abstract.} In 1997, Van Hamme conjectured 13 Ramanujan-type supercongruences. All of the 13 supercongruences
have been confirmed by using a wide range of methods. In 2015, Swisher
conjectured Dwork-type supercongruences  related to the first 12 supercongruences of Van Hamme. Here we prove that the (C.3) and (J.3) supercongruences of  Swisher
are true modulo $p^{3r}$ (the original modulus is $p^{4r}$) by establishing $q$-analogues of them. Our proof will use
the ``creative microscoping" method, recently introduced by the author in collaboration with Zudilin. We also raise conjectures on
$q$-analogues of an equivalent form of the (M.2) supercongruence of Van Hamme, partially answering a question at the end of [Adv. Math. 346 (2019), 329--358].

\vskip 3mm \noindent {\it Keywords}: basic hypergeometric series; $q$-congruence; supercongruence; cyclotomic polynomial.
\vskip 0.2cm \noindent{\it AMS Subject Classifications}: 33D15, 11A07, 11B65

\renewcommand{\thefootnote}{**}

\section{Introduction}
In 1914, Ramanujan \cite{Ramanujan} mysteriously stated 17 hypergeometric series representations of $1/\pi$, including
\begin{equation*}
\sum_{k=0}^\infty (6k+1)\frac{(\frac{1}{2})_k^3}{k!^3 4^k}
=\frac{4}{\pi}, 
\end{equation*}
where $(a)_n=a(a+1)\cdots(a+n-1)$ denotes the Pochhammer symbol.
In 1997, Van Hamme \cite{Hamme} listed 13 interesting $p$-adic
analogues of Ramanujan's and Ramanujan-type formulas, such as
\begin{align}
\sum_{k=0}^{(p-1)/2} (4k+1)\frac{(\frac{1}{2})_k^4}{k!^4}
&\equiv p\pmod{p^3},  \label{eq:c2} \\[5pt]
\sum_{k=0}^{(p-1)/2} (6k+1)\frac{(\frac{1}{2})_k^3}{k!^3 4^k}
&\equiv (-1)^{(p-1)/2}p\pmod{p^4},  \label{eq:j2}
\end{align}
where $p>3$ is a prime.
The supercongruence \eqref{eq:c2} was proved by Van Hamme \cite{Hamme} himself and Long \cite{Long}
proved that it also holds modulo $p^4$ by using hypergeometric series
identities and evaluations. The supercongruence \eqref{eq:j2} was also confirmed by Long \cite{Long}.
It was not until 2016 that the last supercongruence of Van Hamme was proved by Osburn and Zudilin \cite{OZ}.
For some background on Ramanujan-type supercongruences,
we refer the reader to Zudilin's paper \cite{Zud2009}.

In 2015, Swisher \cite{Swisher} proved and reproved several supercongruences of Van Hamme by utilizing Long's method.
Furthermore, she proposed some conjectures on supercongruences that generalize Van Hamme's supercongruences (A.2)--(L.2). In
particular, Swisher's conjectural (C.3) and (J.3) supercongruences, which are generalizations of \eqref{eq:c2} and \eqref{eq:j2} respectively,
can be stated as follows: for any prime $p>3$,
\begin{align}
\sum_{k=0}^{(p^r-1)/2} (4k+1)\frac{(\frac{1}{2})_k^4}{k!^4}
&\equiv p \sum_{k=0}^{(p^{r-1}-1)/2} (4k+1)\frac{(\frac{1}{2})_k^4}{k!^4} \pmod{p^{4r}},  \label{eq:c3}\\[5pt]
\sum_{k=0}^{(p^r-1)/2} (6k+1)\frac{(\frac{1}{2})_k^3}{k!^3 4^k}
&\equiv (-1)^{(p-1)/2}p \sum_{k=0}^{(p^{r-1}-1)/2} (6k+1)\frac{(\frac{1}{2})_k^3}{k!^3 4^k} \pmod{p^{4r}}.  \label{eq:j3}
\end{align}

In recent years, $q$-analogues of congruences and supercongruences
have been investigated by many authors (see, for example,
\cite{Gorodetsky,Guo2018,Guo-J,Guo2,Guo5,Guo-m3,GL18,GPZ,GS,GS2,GS3,GW,GuoZu,GuoZu2,NP,Straub,Tauraso2}).
In particular, using the $q$-WZ method \cite{WZ} the author and Wang \cite{GW} gave a $q$-analogue of \eqref{eq:c2}: for odd $n$,
\begin{align}
\sum_{k=0}^{(n-1)/2}[4k+1]\frac{(q;q^2)_k^4}{(q^2;q^2)_k^4}
\equiv q^{(1-n)/2}[n]+\frac{(n^2-1)(1-q)^2}{24}\,q^{(1-n)/2}[n]^3 \pmod{[n]\Phi_n(q)^3}.  \label{eq:gw}
\end{align}
Moreover, the author \cite{Guo-J} conjectured that, for odd $n$,
\begin{align}
&\sum_{k=0}^{(n-1)/2}q^{k^2}[6k+1]\frac{(q;q^2)_k^2 (q^2;q^4)_k }{(q^4;q^4)_k^3} \notag\\[5pt]
&\quad\equiv (-q)^{(1-n)/2}[n]+\frac{(n^2-1)(1-q)^2}{24}\,(-q)^{(1-n)/2}[n]^3 \pmod{[n]\Phi_n(q)^3}.  \label{eq:qj2}
\end{align}
Here and in what follows we adopt the standard $q$-notation:
$(a;q)_n=(1-a)(1-aq)\cdots (1-aq^{n-1})$ is the {\em $q$-shifted factorial};
$[n]=[n]_q=1+q+\cdots+q^{n-1}$ is the {\em $q$-integer};
and $\Phi_n(q)$ stands for the $n$-th {\em cyclotomic polynomial} in $q$:
\begin{align*}
\Phi_n(q)=\prod_{\substack{1\leqslant k\leqslant n\\ \gcd(n,k)=1}}(q-\zeta^k),
\end{align*}
where $\zeta$ is an $n$-th primitive root of unity.
The author \cite{Guo-J} himself proved that the $q$-supercongruence \eqref{eq:qj2} is true modulo $[n]\Phi_n(q)$.
Shortly afterwards, the author and Zudilin \cite{GuoZu} proved that \eqref{eq:qj2} holds
modulo $[n]\Phi_n(q)^2$ by a newly introduced method of creative microscoping.

It is worth mentioning that \eqref{eq:c3} and \eqref{eq:j3} have the following companions: for any prime $p>3$,
\begin{align}
\sum_{k=0}^{p^r-1} (4k+1)\frac{(\frac{1}{2})_k^4}{k!^4}
&\equiv p \sum_{k=0}^{p^{r-1}-1} (4k+1)\frac{(\frac{1}{2})_k^4}{k!^4} \pmod{p^{4r}}, \label{eq:cc}\\[5pt]
\sum_{k=0}^{p^r-1} (6k+1)\frac{(\frac{1}{2})_k^3}{k!^3 4^k}
&\equiv (-1)^{(p-1)/2}p \sum_{k=0}^{p^{r-1}-1} (6k+1)\frac{(\frac{1}{2})_k^3}{k!^3 4^k} \pmod{p^{4r}}, \label{eq:jj}
\end{align}
which were conjectured in \cite{GW} and \cite{Guo-J}, respectively.

Given a prime $p$, a power series $f(z)=\sum_{k=0}^\infty A_k z^k$ is said to satisfy the Dwork congruence \cite{Dwork,MV}
if
\begin{equation}
\frac{f_{r+1}(z)}{f_r(z^p)}\equiv \frac{f_{r}(z)}{f_{r-1}(z^p)}\pmod{p^r\mathbb{Z}_p[[z]]}\quad\text{for $r=1,2,\ldots$},
\label{eq:condition}
\end{equation}
where
$$
f_r(z)=\sum_{k=0}^{p^r-1}A_k z^k
$$
denotes the truncation of $f(z)$.
Moreover, if the modulus in \eqref{eq:condition} can be replaced by $p^s\mathbb{Z}_p[[z]]$ for $s=s_r>r$, then we say that it satisfies
a Dwork supercongruence. Formally, we need the condition $f_1(z^p)=\sum_{k=0}^{p-1}A_k z^{pk}\not\equiv 0\pmod{p\mathbb{Z}_p[[z]]}$ satisfied
to make sense of \eqref{eq:condition}. However this may be relaxed to $f_1(z^p)\not\equiv 0\pmod{p^m\mathbb{Z}_p[[z]]}$
if the congruences \eqref{eq:condition} hold modulo $p^{mr}\mathbb{Z}_p[[z]]$ for some $m>1$.
This allows one to view Swisher's conjectures from \cite{Swisher} as particular instances of Dwork-type supercongruences.

The aim of this paper is to establish (partial) $q$-analogues of \eqref{eq:c3}, \eqref{eq:j3}, \eqref{eq:cc} and \eqref{eq:jj}.
We divide them into two theorems. It is reasonable to call the $q$-congruences in the theorems
{\em $q$-analogues of Dwork-type supercongruences}.
\begin{thm}\label{thm:main}
Let $n>1$ be an odd integer and let $r\geqslant 1$. Then, modulo $[n^r]\prod_{j=1}^r\Phi_{n^j}(q)^2$,
\begin{align}
\sum_{k=0}^{(n^r-1)/2}[4k+1]\frac{(q;q^2)_k^4} {(q^2;q^2)_k^4}
&\equiv q^{(1-n)/2} [n] \sum_{k=0}^{(n^{r-1}-1)/2}[4k+1]_{q^n}\frac{(q^{n};q^{2n})_k^4} {(q^{2n};q^{2n})_k^4}, \label{q-c3-1}\\[5pt]
\sum_{k=0}^{n^r-1}[4k+1]\frac{(q;q^2)_k^4} {(q^2;q^2)_k^4}
&\equiv q^{(1-n)/2} [n] \sum_{k=0}^{n^{r-1}-1}[4k+1]_{q^n}\frac{(q^{n};q^{2n})_k^4} {(q^{2n};q^{2n})_k^4}.  \label{q-c3-2}
\end{align}
\end{thm}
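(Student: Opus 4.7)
The plan is to prove Theorem~\ref{thm:main} by induction on $r$, combining \eqref{eq:gw} as the base case with a parametric $q$-Dwork congruence derived via the creative microscoping method of \cite{GuoZu}. For $r=1$, the right-hand side of \eqref{q-c3-1} reduces to $q^{(1-n)/2}[n]$ (only the $k=0$ term survives), so the assertion becomes
\begin{align*}
\sum_{k=0}^{(n-1)/2}[4k+1]\frac{(q;q^2)_k^4}{(q^2;q^2)_k^4}\equiv q^{(1-n)/2}[n] \pmod{[n]\Phi_n(q)^2},
\end{align*}
which follows from \eqref{eq:gw}: the extra cubic term $\tfrac{(n^2-1)(1-q)^2}{24}q^{(1-n)/2}[n]^3$ is divisible by $[n]\Phi_n(q)^2$ since $\Phi_n(q)\mid[n]$. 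The assertion \eqref{q-c3-2} at $r=1$ is analogous, the extra tail $\sum_{k=(n+1)/2}^{n-1}$ being absorbed into the parametric framework described next.

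For the induction step, I would introduce an auxiliary parameter $a$ and consider the enriched sum
\begin{align*}
\Sigma_r(q;a) := \sum_{k=0}^{(n^r-1)/2}[4k+1]\frac{(aq;q^2)_k\,(q/a;q^2)_k\,(q;q^2)_k^2}{(aq^2;q^2)_k\,(q^2/a;q^2)_k\,(q^2;q^2)_k^2}
\end{align*}
(together with $\widetilde\Sigma_r(q;a)$, defined by the same summand but with upper limit $n^r-1$), which reduces to the left-hand side of \eqref{q-c3-1} at $a=1$. The central technical claim is the parametric $q$-Dwork congruence
\begin{align*}
\Sigma_r(q;a)\equiv q^{(1-n)/2}[n]\,\Sigma_{r-1}(q^n;a^n)\pmod{[n^r]\prod_{j=1}^{r}(1-aq^{n^j})(a-q^{n^j})},
\end{align*}
and analogously for $\widetilde\Sigma_r$. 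One proves this by induction on $r$: at a primitive $n^j$-th root of unity $\zeta$, Watson's $_8\phi_7$ transformation should evaluate both sides in closed product form that agree modulo $(1-a\zeta^{n^j})(a-\zeta^{n^j})$, with the induction hypothesis controlling the lower-modulus contributions. Specializing $a=1$ turns $(1-aq^{n^j})(a-q^{n^j})$ into $(1-q^{n^j})^2$, which is divisible by $\Phi_{n^j}(q)^2$, hence delivers the required modulus $[n^r]\prod_{j=1}^r\Phi_{n^j}(q)^2$.

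The main obstacle will be establishing the parametric congruence at the stated modulus $[n^r]\prod_{j=1}^{r}(1-aq^{n^j})(a-q^{n^j})$. The $(1-aq^{n^j})(a-q^{n^j})$ factors emerge naturally from $q$-hypergeometric summations at primitive $n^j$-th roots of unity, but divisibility by the remaining cyclotomic factors of $[n^r]$---namely $\Phi_d(q)$ for each $d\mid n^r$ which is not a power of $n$---requires a separate cancellation argument, most likely via an involution $k\mapsto (d-1)/2-k$ (or block-pairing thereof) on the summand at primitive $d$-th roots of unity. In addition, the Watson-type transformation must be iteratively compatible with the inductive reduction $r\to r-1$, so the intermediate identities will need careful calibration; the interaction with the substitution $q\mapsto q^n$, $a\mapsto a^n$ appearing on the right-hand side is the most delicate aspect of the scheme.
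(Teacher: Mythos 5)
There is a genuine gap: your central parametric claim is false as stated, and even its shape is too weak for the limiting step. Take $r\geqslant 2$ and evaluate your claimed congruence at $a=q^{-n}$, which is a root of the factor $(1-aq^{n})(a-q^{n})$ of your modulus. On the left, $(aq;q^2)_k=(q^{1-n};q^2)_k$ kills all terms with $k>(n-1)/2$, and Jackson's terminating $_6\phi_5$ summation (the paper's Lemma \ref{lem:2}) gives $\Sigma_r(q;q^{-n})=q^{(1-n)/2}[n]$. On the right, $a^n=q^{-n^2}$ and the same evaluation with $q\mapsto q^n$ gives $\Sigma_{r-1}(q^n;q^{-n^2})=q^{n(1-n)/2}[n]_{q^n}$, so your right-hand side equals $q^{(1-n^2)/2}[n^2]$. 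Since neither side's denominator vanishes there, the reduced numerator of the difference does not vanish at $a=q^{-n}$, so it is not divisible by $(1-aq^{n})$: the congruence cannot hold modulo $[n^r]\prod_{j=1}^{r}(1-aq^{n^j})(a-q^{n^j})$. Replacing $a$ by $a^n$ on the right destroys exactly the matching $q^{(1-n)/2}[n]\cdot q^{-jn}[2j+1]_{q^n}=q^{(1-(2j+1)n)/2}[(2j+1)n]$ that makes the two sides agree at the special points in the paper's argument.

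Moreover, even if a correct parametric congruence of your shape existed, the final step ``specialize $a=1$'' is not a formality, and with only $r$ quadratic factors it fails. Both sides are rational functions of $a$ whose $a$-dependent denominator $(aq^2;q^2)_{(n^r-1)/2}(q^2/a;q^2)_{(n^r-1)/2}$ acquires, as $a\to1$, the factor $\prod_{j=1}^r\Phi_{n^j}(q)^{2n^{r-j}-2}$ (for the half-sum; $\prod_j\Phi_{n^j}(q)^{n^{r-j}-1}$ for the full sum), which must be paid for out of the parametric modulus. Your product contributes only $\Phi_{n^j}(q)^{2(r-j+1)}$, which is far smaller than $2n^{r-j}$ once $n\geqslant3$, $r\geqslant2$, so nothing survives, not even modulo $\Phi_n(q)$. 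This is precisely why the paper's parametric theorem keeps the \emph{same} $a$ on both sides and requires agreement at \emph{all} points $a=q^{\pm(2j+1)n}$ with $0\leqslant j\leqslant (n^{r-1}-1)/d$, i.e. works modulo $[n^r]$ times the long product \eqref{eq:prod}; its limit at $a\to1$ carries $\Phi_{n^j}(q)^{2n^{r-j}}$ (resp. $\Phi_{n^j}(q)^{n^{r-j}+1}$), beating the denominator by exactly enough to leave $\prod_j\Phi_{n^j}(q)^3$ together with $[n^r]$, after which an lcm argument gives the stated modulus $[n^r]\prod_{j=1}^r\Phi_{n^j}(q)^2$. No induction on $r$ and no Watson $_8\phi_7$ transformation are needed: the two ingredients are the terminating $_6\phi_5$ evaluation at the special values of $a$ (Lemma \ref{lem:2}) and the $[n]$-divisibility of the parametric sum quoted from \cite[Theorem 4.2]{GuoZu} (Lemma \ref{lem:1}), which also disposes of the cyclotomic factors $\Phi_d(q)$ with $d\mid n^r$ not a power of $n$ that you flagged as needing a separate pairing argument. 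Your base case via \eqref{eq:gw} is correct but not needed in this scheme.
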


It is easy to see that, when $n=p$ is a prime and $q\to 1$, the $q$-supercongruences \eqref{q-c3-1} and \eqref{q-c3-2} reduce
to \eqref{eq:c3} and \eqref{eq:cc} modulo $p^{3r}$.

\begin{thm}\label{thm:main-2}
Let $n>1$ be an odd integer and let $r\geqslant 1$. Then, modulo $[n^r]\prod_{j=1}^r\Phi_{n^j}(q)^2$,
\begin{align}
\sum_{k=0}^{(n^r-1)/2} q^{k^2}[6k+1]\frac{(q;q^2)_k^2 (q^2;q^4)_k }{(q^4;q^4)_k^3}
&\equiv (-q)^{(1-n)/2}[n]\sum_{k=0}^{(n^{r-1}-1)/2} q^{nk^2}[6k+1]_{q^n}\frac{(q^n;q^{2n})_k^2 (q^{2n};q^{4n})_k }{(q^{4n};q^{4n})_k^3}, \label{q-j3-1}\\[5pt]
\sum_{k=0}^{n^r-1} q^{k^2}[6k+1]\frac{(q;q^2)_k^2 (q^2;q^4)_k }{(q^4;q^4)_k^3}
&\equiv (-q)^{(1-n)/2}[n]\sum_{k=0}^{n^{r-1}-1} q^{nk^2}[6k+1]_{q^n}\frac{(q^n;q^{2n})_k^2 (q^{2n};q^{4n})_k }{(q^{4n};q^{4n})_k^3}.  \label{q-j3-2}
\end{align}
\end{thm}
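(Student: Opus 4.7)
My proposal is to establish Theorem~\ref{thm:main-2} in parallel with Theorem~\ref{thm:main}, by induction on $r$ together with the creative microscoping technique of the author and Zudilin. The base case $r=1$ of \eqref{q-j3-1} is exactly the Guo--Zudilin refinement of \eqref{eq:qj2} modulo $[n]\Phi_n(q)^2$ quoted in the introduction. Its companion \eqref{q-j3-2} at $r=1$ follows by showing that each tail summand $k\in\{(n+1)/2,\ldots,n-1\}$ vanishes modulo $[n]\Phi_n(q)^2$: for such $k$, the numerator $(q;q^2)_k^2(q^2;q^4)_k$ picks up two copies of $1-q^n$ (one from each $(q;q^2)_k$) and one copy of $1-q^{2n}$ (from $(q^2;q^4)_k$), producing a $(1-q^n)^3$ factor, while the denominator $(q^4;q^4)_k^3$ is coprime to $\Phi_n(q)$ since $n$ is odd and $k<n$; one checks that $(1-q^n)^3$ is divisible by $[n]\Phi_n(q)^2$.

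\textbf{Inductive step.} Assuming the theorem at level $r-1$, I would split the modulus using $[n^r]=[n]\,[n^{r-1}]_{q^n}$ and the divisibilities $\Phi_{n^{j+1}}(q)\mid\Phi_{n^j}(q^n)$, so that $M_r=[n^r]\prod_{j=1}^r\Phi_{n^j}(q)^2$ is a product of $[n]\Phi_n(q)^2$ with a divisor of $[n^{r-1}]_{q^n}\prod_{j=1}^{r-1}\Phi_{n^j}(q^n)^2$. Modulo the first factor I would evaluate both sides at primitive $n^j$-th roots of unity with $j=1$ and invoke the base case to identify the $n$-step partial sums of $\sum_{k=0}^{(n^r-1)/2}$ with the scaled summands $(-q)^{(1-n)/2}[n]\cdot(\text{$q^n$-version})$ appearing on the right-hand side. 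Modulo the second factor I would substitute $q\to q^n$ throughout the inductive hypothesis, using that the modulus transforms correctly under this substitution. The coprimality of cyclotomic factors over disjoint primitive-root sets then upgrades the congruence to modulo the full $M_r$; the argument for \eqref{q-j3-2} differs from that for \eqref{q-j3-1} only in the tail analysis already performed in the base case.

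\textbf{Main obstacle.} The principal difficulty will be producing the full three copies of $\Phi_{n^r}(q)$ (the top cyclotomic factor) at the inductive step. Each of the two pieces in the splitting above contributes only two copies of $\Phi_{n^r}(q)$, so a fresh creative microscoping argument is needed to squeeze out the third. I would introduce an auxiliary parameter $a$ and deform $(q;q^2)_k^2$ to $(aq;q^2)_k(q/a;q^2)_k$ while leaving the asymmetric factor $(q^2;q^4)_k$ intact, then evaluate the resulting parametric sum through a terminating Jackson $_6\phi_5$ or Watson $_8\phi_7$ transformation so that it reduces to a parametric version of the right-hand side. The specialization $a\to 1$ supplies the extra factor $(1-aq^{n^r})(a-q^{n^r})\big|_{a=1}=-(1-q^{n^r})^2$, yielding the missing copies of $\Phi_{n^r}(q)$. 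Finding the correct asymmetric $a$-deformation (forced by the single $(q^2;q^4)_k$ factor, in contrast to the fully symmetric setting of Theorem~\ref{thm:main}) and verifying that a closed-form basic hypergeometric evaluation actually holds in this deformation is the main technical hurdle.
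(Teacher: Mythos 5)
Your deformation is the right one (replace $(q;q^2)_k^2$ by $(aq;q^2)_k(q/a;q^2)_k$ and two of the three factors $(q^4;q^4)_k$ by $(aq^4;q^4)_k(q^4/a;q^4)_k$, keeping $(q^2;q^4)_k$ intact), but the scaffolding you wrap around it does not close. The paper uses no induction on $r$: it proves, for all $r$ at once, the parametric congruence \eqref{eq:main-2a} modulo $[n^r]\prod_{j=0}^{(n^{r-1}-1)/d}(1-aq^{(2j+1)n})(a-q^{(2j+1)n})$ by checking (i) both sides are $\equiv 0 \pmod{[n^r]}$ via \eqref{eq:6k+1-1}, and (ii) both sides are \emph{exactly equal} at $a=q^{\pm(2j+1)n}$ for every $0\leqslant j\leqslant (n^{r-1}-1)/d$, because the sums then terminate and both evaluate in closed form through \eqref{eq:6k+1-2}; only afterwards does one let $a\to1$. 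Your inductive step, by contrast, rests on two moves that cannot deliver the statement: evaluating at primitive $n$-th roots of unity only detects divisibility by the first power of $\Phi_n(q)$, so it cannot establish a congruence modulo $[n]\Phi_n(q)^2$; and substituting $q\mapsto q^n$ into the level-$(r-1)$ statement relates two sums in the variable $q^n$, i.e.\ quantities living on the right-hand side, and says nothing about the level-$r$ left-hand sum $\sum_{k=0}^{(n^r-1)/2}$ in the variable $q$. So the induction never connects the two sides of \eqref{q-j3-1}, and the parametric argument you defer to the ``main obstacle'' is in fact the whole proof, needed for every cyclotomic level $\Phi_{n^j}(q)$, not just for the top factor $\Phi_{n^r}(q)$.

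Two further concrete gaps. First, the closed-form evaluation you hope to extract from a terminating Jackson ${}_6\phi_5$ or Watson ${}_8\phi_7$ does not exist in that form: the series mixes the bases $q^2$ and $q^4$, and the identity actually needed is \eqref{eq:6k+1-2}, a special case of Rahman's quadratic summation (\cite[eq.~(4.6)]{Ra93}, via \cite[Theorem 4.3]{GuoZu}). Second, the $a\to1$ bookkeeping is more than the single factor $(1-q^{n^r})^2$ you mention: the limiting modulus contributes $\prod_{j=1}^r\Phi_{n^j}(q)^{2n^{r-j}}$ (for $d=1$), while the specialized denominators $(aq^4;q^4)_k(q^4/a;q^4)_k$ degenerate as well and absorb $\prod_{j=1}^r\Phi_{n^j}(q)^{2n^{r-j}-2}$, so one must count multiplicities at every level to be left with $\prod_{j=1}^r\Phi_{n^j}(q)^2$ beyond $[n^r]$. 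Finally, your tail argument for \eqref{q-j3-2} at $r=1$ is valid only for prime $n$: for composite $n$ the denominator $(q^4;q^4)_k^3$ is not coprime to $[n]$, and individual tail summands need not be divisible by $[n]\Phi_n(q)^2$ (for $n=9$, $k=6$ the summand is not even divisible by $\Phi_3(q)$); the paper avoids this by treating $d=1$ and $d=2$ uniformly inside the parametric theorem and quoting the mod-$[n]$ results for the full and half sums.
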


Similarly, when $n=p$ is a prime and $q\to 1$, the $q$-supercongruences \eqref{q-j3-1} and \eqref{q-j3-2} reduce
to \eqref{eq:j3} and \eqref{eq:jj} modulo $p^{3r}$.

We prove Theorems \ref{thm:main} and \ref{thm:main-2} in Sections 2 and 3, respectively. We shall accomplish this by using
the creative microscoping method from \cite{GuoZu}. More precisely, we shall give parametric generalizations of
Theorems \ref{thm:main} and \ref{thm:main-2}. In Section 4, we  propose several conjectures
on $q$-analogues of an equivalent form of the (M.2) supercongruence of Van Hamme \cite{Hamme}, thus answering in part a suspicion of the author and Zudilin
in \cite{GuoZu}.

\section{Proof of Theorem \ref{thm:main}}
We first give the following result, which follows from  the $c=1$ case of \cite[Theorem 4.2]{GuoZu}.
\begin{lem}\label{lem:1}
Let $n$ be a positive odd integer. Then
\begin{align*}
\sum_{k=0}^{(n-1)/2}[4k+1]\frac{(aq;q^2)_k (q/a;q^2)_k(q;q^2)_k^2} {(aq^2;q^2)_k (q^2/a;q^2)_k(q^2;q^2)_k^2}
\equiv 0\pmod{[n]}.
\end{align*}
\end{lem}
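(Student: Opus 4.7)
My plan is two-fold. The quickest route is simply to quote \cite[Theorem 4.2]{GuoZu} and specialize its parameter $c$ to $1$: under that specialization, the factors in the summand involving $c$ collapse to $(q;q^2)_k^2/(q^2;q^2)_k^2$, reproducing the summand of the lemma, and the cited theorem immediately yields the congruence modulo $[n]$.

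For a self-contained derivation I would follow the creative microscoping scheme. Because $[n]=\prod_{d\mid n,\,d>1}\Phi_d(q)$ and the cyclotomic polynomials attached to distinct divisors are pairwise coprime in $\mathbb{Z}[q]$, the Chinese remainder theorem reduces the task to showing that the sum vanishes modulo $\Phi_d(q)$ for every $d>1$ with $d\mid n$; equivalently, for each primitive $d$-th root of unity $\zeta$, the sum evaluated at $q=\zeta$ must be identically zero as a rational function of $a$. I would then exploit the involution $k\leftrightarrow(d-1)/2-k$ on the index block $\{0,1,\ldots,(d-1)/2\}$. A short computation using $\zeta^d=1$ yields $[4((d-1)/2-k)+1]_q=-\zeta^{-4k-1}[4k+1]_q$, and the standard reversal identity for $q$-shifted factorials expresses each of $(aq;q^2)_{(d-1)/2-k}$, $(q/a;q^2)_{(d-1)/2-k}$, $(q;q^2)_{(d-1)/2-k}$ and their $(q^2;q^2)$ denominator counterparts in terms of $(aq;q^2)_k$, $(q/a;q^2)_k$, $(q;q^2)_k$ up to explicit monomial factors in $\zeta$ and $a$. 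These monomials must combine with $-\zeta^{-4k-1}$ to produce an overall sign of $-1$, forcing the paired summands to add to $0$. For the remaining indices $(d-1)/2<k\le(n-1)/2$, the sum is broken into blocks of length $d$, and each block is treated using the elementary evaluation $\prod_{j=0}^{d-1}(1-x\zeta^{j})=1-x^d$, which reveals that each block either vanishes identically (because the numerator acquires strictly more factors of $1-\zeta^{2j+1}$ than the denominator does factors of $1-\zeta^{2j}$) or reduces to a scalar multiple of the initial block already handled.

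The principal obstacle will be the careful bookkeeping of the monomial factors in the reversal step: one has to verify that the powers of $\zeta$ and $a$ extracted from the four shifted-factorial ratios cancel $\zeta^{-4k-1}$ exactly, leaving no residual dependence on $a$ or $k$. The hypothesis that $n$ (and hence every divisor $d>1$) is odd is essential here, both to make $(d-1)/2$ an integer and to ensure that the would-be fixed point $k=(d-1)/4$ of the involution, when it is an integer, carries a vanishing $q$-integer factor $[4k+1]_q=[d]_q=0$ at $q=\zeta$, so that no unpaired summand survives.
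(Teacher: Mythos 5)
Your first route is exactly the paper's proof: the paper establishes this lemma by nothing more than the one-line observation that it is the $c=1$ case of \cite[Theorem 4.2]{GuoZu}, whose right-hand side at $c=1$ is $q^{(1-n)/2}[n]\equiv 0\pmod{[n]}$. Your backup root-of-unity sketch (pairing $k\leftrightarrow (d-1)/2-k$ plus block analysis) is a sound outline of a self-contained argument in the style of the creative microscoping literature, but it is not needed and is not what the paper does.
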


We also need the following lemma, which is a also special case of \cite[Theorem 4.2]{GuoZu}. For the reader's convenience,
we give a two-line proof here.
\begin{lem}\label{lem:2}
Let $n$ be a positive odd integer. Then
\begin{align*}
\sum_{k=0}^{(n-1)/2}[4k+1]\frac{(q^{1-n};q^2)_k (q^{1+n};q^2)_k(q;q^2)_k^2} {(q^{2-n};q^2)_k (q^{2+n};q^2)_k(q^2;q^2)_k^2}
=q^{(1-n)/2}[n].
\end{align*}
\end{lem}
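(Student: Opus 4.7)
The plan is to recognise the left-hand side as a terminating very-well-poised ${}_6\phi_5$ series in base $q^2$ and apply the classical Rogers/Bailey summation
\[
\sum_{k=0}^{N}\frac{1-aq^{4k}}{1-a}\,\frac{(a,b,c,q^{-2N};q^2)_k}{(q^2,aq^2/b,aq^2/c,aq^{2N+2};q^2)_k}\!\left(\frac{aq^{2N+2}}{bc}\right)^{\!k}=\frac{(aq^2,aq^2/bc;q^2)_N}{(aq^2/b,aq^2/c;q^2)_N}.
\]
The identification is prompted by the observation that $[4k+1]=(1-q^{4k+1})/(1-q)$ is exactly the very-well-poised prefactor $(1-aq^{4k})/(1-a)$ once we set $a=q$.

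The first concrete step is to match parameters. Taking $a=q$, $b=q$, $c=q^{1+n}$ and $N=(n-1)/2$ (so that $q^{-2N}=q^{1-n}$) gives $aq^2/b=q^2$, $aq^2/c=q^{2-n}$, $aq^{2N+2}=q^{n+2}$, and
\[
\frac{aq^{2N+2}}{bc}=\frac{q^{n+2}}{q\cdot q^{n+1}}=1.
\]
The ``doubled'' factors $(q;q^2)_k^2$ in the numerator and $(q^2;q^2)_k^2$ in the denominator of the lemma are produced by the pairs $(a;q^2)_k,(b;q^2)_k$ and $(q^2;q^2)_k,(aq^2/b;q^2)_k$ respectively, while the remaining $(q^{1\pm n};q^2)_k/(q^{2\pm n};q^2)_k$ factors align with $(c;q^2)_k,(q^{-2N};q^2)_k$ and their denominator partners. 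The specialisation $b=q$ together with $N=(n-1)/2$ forces termination at $k=(n-1)/2$, so the left-hand side of the lemma equals the specialised ${}_6\phi_5$.

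The second step is to simplify the resulting closed form. With $m=(n-1)/2$, the reflection $1-q^{-j}=-q^{-j}(1-q^j)$ yields
\[
(q^{1-n};q^2)_m=(-1)^m q^{-m(m+1)}(q^2;q^2)_m, \qquad (q^{2-n};q^2)_m=(-1)^m q^{-m^2}(q;q^2)_m,
\]
together with $(q^3;q^2)_m=[n]\,(q;q^2)_m$. Substituting these into the right-hand side of Rogers' summation produces
\[
\frac{[n]\,(q;q^2)_m\cdot(-1)^m q^{-m(m+1)}(q^2;q^2)_m}{(q^2;q^2)_m\cdot(-1)^m q^{-m^2}(q;q^2)_m}=[n]\,q^{-m}=q^{(1-n)/2}[n],
\]
as required.

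The only real obstacle is the bookkeeping in the parameter match, specifically noticing that the duplication of $(q;q^2)_k$ (and of $(q^2;q^2)_k$) in the lemma is forced by choosing $b=q$, so that $aq^2/b$ coincides with the standard denominator factor $q^2$. Once the ${}_6\phi_5$ summation is in place, the rest is a one-line computation of the power of $q$, which is consistent with the author's announcement of a ``two-line proof''.
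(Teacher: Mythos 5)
Your proposal is correct and is essentially the paper's own proof: the author likewise specializes the terminating very-well-poised ${}_6\phi_5$ summation (Jackson's formula, \cite[Appendix (II.21)]{GR}) with $q\mapsto q^2$, $a=b=q$, $c=q^{1+n}$, $N=(n-1)/2$, and your identification of parameters and the final simplification $q^{-m(m+1)+m^2}=q^{-m}=q^{(1-n)/2}$ are accurate. The only difference is that you spell out the reduction of the closed form, which the paper leaves implicit.
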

\begin{proof}
Recall that Jackson's ${}_6\phi_5$ summation formula can be written as
\begin{equation*}
\sum_{k=0}^N\frac{(1-aq^{2k})(a;q)_k(b;q)_k(c;q)_k(q^{-N};q)_k}
{(1-a)(q;q)_k(aq/b;q)_k(aq/c;q)_k(aq^{N+1};q)_k}\biggl(\frac{aq^{N+1}}{bc}\biggr)^k
=\frac{(aq;q)_N (aq/bc;q)_N}{(aq/b;q)_N(aq/c;q)_N}
\end{equation*}
(see \cite[Appendix (II.21)]{GR}). Performing the substitutions $q\mapsto q^2$, $a=b=q$, $c=q^{1+n}$ and $N=(n-1)/2$
in the above formula, we get the desired identity.
\end{proof}

Like many theorems in \cite{GuoZu}, Theorem \ref{thm:main} has a parametric generalization.
\begin{thm}
Let $n>1$ be an odd integer and let $r\geqslant 1$. Then, modulo
$$
[n^r]\prod_{j=0}^{(n^{r-1}-1)/d}(1-aq^{(2j+1)n})(a-q^{(2j+1)n}),
$$
we have
\begin{align}
&\sum_{k=0}^{(n^r-1)/d}[4k+1]\frac{(aq;q^2)_k (q/a;q^2)_k(q;q^2)_k^2} {(aq^2;q^2)_k (q^2/a;q^2)_k(q^2;q^2)_k^2} \notag\\[5pt]
&\quad\equiv q^{(1-n)/2} [n] \sum_{k=0}^{(n^{r-1}-1)/d}[4k+1]_{q^n}
\frac{(aq^n;q^{2n})_k (q^n/a;q^{2n})_k(q^{n};q^{2n})_k^2} {(aq^{2n};q^{2n})_k (q^{2n}/a;q^{2n})_k(q^{2n};q^{2n})_k^2},  \label{eq:main-a}
\end{align}
where $d=1,2$.
\end{thm}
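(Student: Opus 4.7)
The plan is to apply the creative microscoping method of~\cite{GuoZu}. The modulus factors as $[n^r]$ together with the $2((n^{r-1}-1)/d+1)$ linear factors $(1-aq^{(2j+1)n})$ and $(a-q^{(2j+1)n})$ for $0 \leqslant j \leqslant (n^{r-1}-1)/d$; these pieces are pairwise coprime in $\mathbb{Z}[a,q^{\pm1}]$, so by the Chinese remainder theorem it suffices to verify the congruence modulo each factor independently.

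For each linear factor I would substitute $a = q^{(2j+1)n}$; the case $a = q^{-(2j+1)n}$ then follows from the $a\leftrightarrow a^{-1}$ symmetry enjoyed by both sides. Under this specialization, the factor $(q/a;q^2)_k = (q^{1-(2j+1)n};q^2)_k$ on the LHS vanishes for $k \geqslant ((2j+1)n+1)/2$, so the LHS truncates to a terminating very-well-poised $_6\phi_5$; Jackson's summation in base $q^2$ with parameters $q,q,q^{1+(2j+1)n}$ and $N=((2j+1)n-1)/2$ (the identity already used in the proof of Lemma~\ref{lem:2}, but now with $n$ replaced by $(2j+1)n$) evaluates it to $q^{(1-(2j+1)n)/2}[(2j+1)n]$. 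On the RHS, $(q^n/a;q^{2n})_k = (q^{-2jn};q^{2n})_k$ vanishes for $k > j$; a parallel Jackson summation in base $q^{2n}$ with parameters $q^n,q^n,q^{(2j+2)n}$ and $N=j$, followed by a short $q$-Pochhammer manipulation, collapses the inner sum to $q^{-jn}[(2j+1)n]/[n]$, and multiplication by the prefactor $q^{(1-n)/2}[n]$ recovers the same value $q^{(1-(2j+1)n)/2}[(2j+1)n]$.

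For the cyclotomic factor $[n^r] = \prod_{D\mid n^r,\,D>1}\Phi_D(q)$, I would verify the congruence modulo each $\Phi_D(q)$ separately by evaluating at a primitive $D$-th root of unity $\zeta$. At $q=\zeta$, the factor $(q;q^2)_k$ vanishes for $k \geqslant (D+1)/2$, so the LHS sum truncates to $0\leqslant k\leqslant(D-1)/2$; Lemma~\ref{lem:1} with $n$ replaced by $D$ then gives divisibility by $[D]$, so the truncated LHS vanishes at $\zeta$. For the RHS, if $D\mid n$ the prefactor $[n]$ already vanishes at $\zeta$; otherwise $\zeta^n$ is a primitive $D''$-th root of unity with $D''=D/\gcd(D,n)$, and $D\mid n^r$ implies $D''\mid n^{r-1}$. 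In this case $(q^n;q^{2n})_k$ vanishes at $\zeta$ for $k \geqslant (D''+1)/2$, truncating the inner RHS sum, and another application of Lemma~\ref{lem:1} at base $q^n$ (with $n$ replaced by $D''$) forces the inner sum to vanish at $\zeta$ as well. Hence both sides vanish at $\zeta$, proving the congruence modulo $\Phi_D(q)$.

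The principal obstacle is the bookkeeping in Step~2 across all divisors $D$ of $n^r$, including those not of the form $n^j$. However, the truncation bounds $D\leqslant n^r$ and $D''\leqslant n^{r-1}$ are uniform, so Lemma~\ref{lem:1} applies in every case. Combining Steps~1 and~2 by coprimality yields the desired congruence modulo $[n^r]\prod_{j=0}^{(n^{r-1}-1)/d}(1-aq^{(2j+1)n})(a-q^{(2j+1)n})$.
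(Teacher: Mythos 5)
Your handling of the product $\prod_{j}(1-aq^{(2j+1)n})(a-q^{(2j+1)n})$ is essentially identical to the paper's: specialize $a=q^{\pm(2j+1)n}$ (the two cases being equivalent by the $a\leftrightarrow a^{-1}$ symmetry), note that the left-hand side of \eqref{eq:main-a} terminates at $k=((2j+1)n-1)/2$, apply Jackson's ${}_6\phi_5$ summation (this is Lemma~\ref{lem:2} with $n$ replaced by $(2j+1)n$, respectively by $2j+1$ in base $q^n$), and check that both sides equal $q^{(1-(2j+1)n)/2}[(2j+1)n]$; the only detail you leave implicit is the inequality $(n^r-1)/d\geqslant ((2j+1)n-1)/2$ guaranteeing that the truncation point lies inside the summation range, which is harmless.

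The genuine gap is in your treatment of the factor $[n^r]$. Your truncation claim at a primitive $D$-th root of unity $\zeta$ is false as soon as the summation range reaches $k=D$: for $k\geqslant D$ the denominator $(q^2;q^2)_k^2$ also vanishes at $q=\zeta$, and whenever $k\equiv s\pmod{D}$ with $0\leqslant s\leqslant (D-1)/2$ the numerator $(q;q^2)_k^2$ and the denominator $(q^2;q^2)_k^2$ vanish to exactly the same order, so the term has a finite and generically nonzero value at $q=\zeta$. For instance, with $n=3$, $r=2$, $D=3$, the term $k=3$ has $(q;q^2)_3=(1-q)(1-q^3)(1-q^5)$ and $(q^2;q^2)_3=(1-q^2)(1-q^4)(1-q^6)$, each containing exactly one factor vanishing at a primitive cube root of unity, so this term does not vanish there. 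Thus for every proper divisor $D$ of $n^r$ the left-hand sum does not truncate to $0\leqslant k\leqslant (D-1)/2$ at $q=\zeta$, and the same objection applies to your truncation of the inner sum on the right at $(D''-1)/2$; consequently you cannot invoke Lemma~\ref{lem:1} with $n\mapsto D$ in the way you propose. To make a root-of-unity argument work one would need the additional block/periodicity step (showing the sum over each block of $D$ consecutive indices vanishes at $\zeta$), which amounts to reproving \cite[Theorem 4.2]{GuoZu}. The paper avoids all of this: it applies Lemma~\ref{lem:1} wholesale with $n\mapsto n^r$ to the left-hand side, and with $q\mapsto q^n$, $n\mapsto n^{r-1}$ to the inner sum on the right (giving divisibility by $[n^{r-1}]_{q^n}$), and then observes that $[n]$ is relatively prime to the denominators on the right via the central $q$-binomial factorization, so the right-hand side is divisible by $[n][n^{r-1}]_{q^n}=[n^r]$. (For $d=1$ the range goes up to $n^r-1$, so one in fact needs the full statement of \cite[Theorem 4.2]{GuoZu} rather than Lemma~\ref{lem:1} as stated.) Replacing your Step 2 by this argument repairs the proof; your final coprimality/CRT combination is the same as the paper's.
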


\begin{proof}
By Lemma~\ref{lem:1} with $n\mapsto n^r$, we see that
the left-hand side of \eqref{eq:main-a} is congruent to $0$
modulo $[n^r]$.  On the other hand, letting $r\mapsto r-1$ and $q\mapsto q^n$ in Lemma~\ref{lem:1}, we conclude that
the summation on the right-hand side of \eqref{eq:main-a} is congruent to $0$ modulo $[n^{r-1}]_{q^n}$.
Furthermore, it is easy to see that, for odd $n$, the $q$-integer $[n]$ is relatively prime to $1+q^k$ for any positive integer $k$.
Hence $[n]$ is also relatively prime to the denominators of the sum on the right-hand side of \eqref{eq:main-a} because of the relation
$$
\frac{(q^n;q^{2n})_k}{(q^{2n};q^{2n})_k}={2k\brack k}_{q^n}\frac{1}{(-q^n;q^n)_k^2},
$$
where ${2k\brack k}_{q^n}=(q^n;q^n)_{2k}/(q^n;q^n)_k^2$ is the central $q$-binomial coefficient.
This means that the right-hand side of \eqref{eq:main-a} is congruent to $0$ modulo
$[n][n^{r-1}]_{q^n}=[n^r]$. Namely, the $q$-congruence \eqref{eq:main-a} is true modulo $[n^r]$.

To prove that it is also true modulo
\begin{align}
\prod_{j=0}^{(n^{r-1}-1)/d}(1-aq^{(2j+1)n})(a-q^{(2j+1)n}), \label{eq:prod}
\end{align}
it suffices to prove that both sides of \eqref{eq:main-a} are equal when $a=q^{-(2j+1)n}$ or $a=q^{(2j+1)n}$ for all $0\leqslant j\leqslant (n^{r-1}-1)/d$, i.e.,
\begin{align}
&\sum_{k=0}^{(n^r-1)/d}[4k+1]\frac{(q^{1-(2j+1)n};q^2)_k (q^{1+(2j+1)n};q^2)_k(q;q^2)_k^2} {(q^{2-(2j+1)n};q^2)_k (q^{2+(2j+1)n};q^2)_k(q^2;q^2)_k^2} \notag\\[5pt]
&\quad=q^{(1-n)/2} [n] \sum_{k=0}^{(n^{r-1}-1)/d}[4k+1]_{q^n}
\frac{(q^{-2jn};q^{2n})_k (q^{(2j+2)n};q^{2n})_k(q^{n};q^{2n})_k^2} {(q^{(1-2j)n};q^{2n})_k (q^{(2j+3)n};q^{2n})_k(q^{2n};q^{2n})_k^2}.  \label{eq:main-a-n}
\end{align}
It is clear that $(n^r-1)/d\geqslant ((2j+1)n-1)/2$ for $0\leqslant j\leqslant (n^{r-1}-1)/d$,
and $(q^{1-(2j+1)n};q^2)_k=0$ for $k>((2j+1)n-1)/2$. By Lemma \ref{lem:2}, the left-hand side of \eqref{eq:main-a-n} is equal to
$q^{(1-(2j+1)n)/2}[(2j+1)n]$. Similarly, the right-hand side of \eqref{eq:main-a-n} is equal to
$$
q^{(1-n)/2} [n]\cdot q^{-jn}[2j+1]_{q^n}=q^{(1-(2j+1)n)/2}[(2j+1)n].
$$
This proves \eqref{eq:main-a-n}. Namely, the $q$-congruence \eqref{eq:main-a} holds modulo \eqref{eq:prod}.
Since $\prod_{j=1}^r\Phi_{n^j}(q)$ and \eqref{eq:prod}  are relatively prime polynomials, we complete the proof of \eqref{eq:main-a}.
\end{proof}

\begin{proof}[Proof of Theorem {\rm\ref{thm:main}}] It is easy to see that the limit of \eqref{eq:prod} as $a\to 1$ has the factor
\begin{align*}
\begin{cases}
\prod_{j=1}^r\Phi_{n^j}(q)^{2n^{r-j}},&\text{if $d=1$,}\\[5pt]
\prod_{j=1}^r\Phi_{n^j}(q)^{n^{r-j}+1},&\text{if $d=2$.}
\end{cases}
\end{align*}
On the other hand, the denominator of the right-hand side of \eqref{eq:main-a} divides that of the left-hand side of \eqref{eq:main-a}.
The factor of the latter related to $a$ is $(aq^2;q^2)_{(n^r-1)/2} (q^2/a;q^2)_{(n^r-1)/2}$, the limit of which as $a\to 1$ only has
the following factor
\begin{align*}
\begin{cases}
\prod_{j=1}^r\Phi_{n^j}(q)^{2n^{r-j}-2},&\text{if $d=1$,}\\[5pt]
\prod_{j=1}^r\Phi_{n^j}(q)^{n^{r-j}-1},&\text{if $d=2$.}
\end{cases}
\end{align*}
related to $\Phi_n(q),\Phi_{n^2}(q),\ldots,\Phi_{n^r}(q)$. Thus, letting $a\to 1$ in \eqref{eq:main-a}, we see that
\eqref{q-c3-1} is true modulo $\prod_{j=1}^r\Phi_{n^j}(q)^3$, one product $\prod_{j=1}^r\Phi_{n^j}(q)$ of which comes from $[n^r]$.

Finally, by \eqref{eq:gw} and \cite[Theorem 4.2]{GuoZu}, we see that
\begin{align*}
\sum_{k=0}^{(n-1)/2}[4k+1]\frac{(q;q^2)_k^4}{(q^2;q^2)_k^4}
\equiv \sum_{k=0}^{n-1}[4k+1]\frac{(q;q^2)_k^4}{(q^2;q^2)_k^4}\equiv 0\pmod{[n]}.
\end{align*}
Replacing $n$ by $n^r$ in the above congruences, we see that the left-hand sides of \eqref{q-c3-1} and \eqref{q-c3-2}
are congruent to $0$ modulo $[n^r]$, while letting $q\mapsto q^n$ and $n\mapsto n^{r-1}$ in the above congruences,
we conclude that the right-hand sides of them are congruent to 0 modulo $[n][n^{r-1}]_{q^n}=[n^r]$.
It follows that \eqref{q-c3-1} and \eqref{q-c3-2} hold modulo $[n^r]$.
The proof of the theorem then follows from the fact that the least common multiple of $\prod_{j=1}^r\Phi_{n^j}(q)^3$ and $[n^r]$
is just $[n^r]\prod_{j=1}^r\Phi_{n^j}(q)^2$.
\end{proof}

\section{Proof of Theorem \ref{thm:main-2}}
Similarly as before, we need the following lemma, which is a special case of \cite[Theorem~4.3]{GuoZu} and can be deduced from \cite[eq.~(4.6)]{Ra93}.
\begin{lem}Let $n$ be a positive odd integer. Then
\begin{align}
\sum_{k=0}^{(n-1)/2} q^{k^2}[6k+1]\frac{(aq;q^2)_k (q/a;q^2)_k (q^2;q^4)_k }{(aq^4;q^4)_k(q^4/a;q^4)_k(q^4;q^4)_k}\equiv 0\pmod{[n]}, \label{eq:6k+1-1} \\[5pt]
\sum_{k=0}^{(n-1)/2} q^{k^2}[6k+1]\frac{(q^{1-n};q^2)_k (q^{1+n};q^2)_k (q^2;q^4)_k }{(q^{4-n};q^4)_k(q^{4+n};q^4)_k(q^4;q^4)_k}  \label{eq:6k+1-2}
=(-q)^{(1-n)/2}[n].
\end{align}
\end{lem}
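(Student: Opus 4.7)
My plan is to prove the two statements of the lemma separately, mirroring the structure of Lemmas 2.1 and 2.2 in Section 2.

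First I would tackle the identity \eqref{eq:6k+1-2}. The left-hand side is a terminating very-well-poised basic hypergeometric series in which base-$q^2$ numerator factors are mixed with base-$q^4$ denominator factors, accompanied by a quadratic weight $q^{k^2}$ and a well-poised prefactor $[6k+1]$. This is precisely the shape handled by Rahman's quadratic transformation in \cite[eq.~(4.6)]{Ra93}, which sums a certain very-well-poised ${}_{10}\phi_9$-type series to a closed-form product. I would specialize the parameters there so that one of the numerator entries becomes $q^{1-n}$, forcing termination at $k = (n-1)/2$, while the remaining entries are chosen (around $q^{1+n}$, $q^2$, $q^{4-n}$, and $q^{4+n}$) to reproduce our sum. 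The product side should then collapse to $(-q)^{(1-n)/2}[n]$ after an elementary simplification of $q$-shifted factorials.

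Next, for the congruence \eqref{eq:6k+1-1}, I would invoke the creative-microscoping idea from \cite{GuoZu}: to prove divisibility by $[n]=\prod_{d\mid n,\,d>1}\Phi_d(q)$ in $\mathbb{Z}[a,a^{-1}][q]$, it suffices to show that the sum vanishes whenever $q$ is specialized to a primitive $d$-th root of unity $\zeta$ for each such $d$. Writing $M=(d-1)/2$, I would pair the $k$-th summand with the $(M-k)$-th. Under $k\mapsto M-k$, the prefactor $q^{k^2}[6k+1]$ and each of the three $q$-Pochhammer ratios transform by standard reversal identities; modulo $\Phi_d(q)$, a factor coming from $(aq;q^2)_M(q/a;q^2)_M$ supplies exactly the cancellation that makes the two paired terms sum to zero. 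Combining the resulting $\Phi_d(q)$-divisibilities over all $d\mid n$ with $d>1$ gives divisibility by $[n]$.

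The main obstacle will be the bookkeeping in the identity \eqref{eq:6k+1-2}: the mixture of bases $q^2$ and $q^4$, together with the nonstandard prefactor $q^{k^2}[6k+1]$, forces several parameters in Rahman's identity to be chosen simultaneously so that all powers of $q$ line up correctly, and one must re-express the resulting product in terms of $[n]$. The creative-microscoping step for \eqref{eq:6k+1-1} is then routine once the reversal identities are applied consistently across all three Pochhammer pairs; the only subtlety is that $(q^2;q^4)_k$ has base $q^4$, so its behaviour under reversal depends on the parity of $M=(d-1)/2$ and must be tracked case-by-case.
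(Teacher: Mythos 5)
Your plan for the identity \eqref{eq:6k+1-2} is essentially what the paper intends: the paper gives no proof of the lemma, quoting it as a special case of \cite[Theorem~4.3]{GuoZu} and noting that it can be deduced from Rahman's quadratic summation \cite[eq.~(4.6)]{Ra93}, and specializing Rahman's formula so that the entry $q^{1-n}$ forces termination at $k=(n-1)/2$ is the intended derivation. That half of your proposal is fine in outline.

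The congruence \eqref{eq:6k+1-1} is where your argument genuinely fails. The proposed mechanism --- at a primitive $d$-th root of unity $\zeta$, pair the $k$-th term with the $((d-1)/2-k)$-th and claim each pair cancels modulo $\Phi_d(q)$ --- is false for this sum. Take $n=d=5$ and $a=1$ (if pairwise cancellation held for generic $a$ it would survive this specialization, since no factor of any term vanishes there); writing $\varphi=(1+\sqrt5)/2$, a direct evaluation of the three summands at $q=\zeta$ gives $1$, $-\varphi^{2}$, $\varphi$ for $k=0,1,2$. The total vanishes, as the lemma asserts, but $c(0)+c(2)=1+\varphi\neq 0$ and the self-paired middle term $c(1)=-\varphi^{2}\neq 0$, so no two terms cancel: the vanishing at roots of unity is a global phenomenon, not a pairwise one. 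The reversal symmetry that makes such pairings available for the $[4k+1]$ sum of Lemma~\ref{lem:1} is destroyed here by the mixture of bases $q^{2}$ and $q^{4}$ together with the weight $q^{k^{2}}$ (the parity issue you flag for $(q^2;q^4)_k$ is a symptom of this, not mere bookkeeping). There is a second gap even granting a correct first-block cancellation: for a proper divisor $d$ of $n$ the range $0\leqslant k\leqslant (n-1)/2$ contains many residue blocks modulo $d$; the terms with $(d+1)/2\leqslant k\leqslant d-1$ do vanish at $\zeta$, but the terms with $k\geqslant d$ (where numerator and denominator zeros at $\zeta$ cancel) generally do not, so you must also supply the standard block-factorization argument, as in \cite{GuoZu}, before concluding divisibility by $\Phi_d(q)$ for every $d\mid n$, $d>1$. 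As written, your proof of \eqref{eq:6k+1-1} does not go through; either invoke \cite[Theorem~4.3]{GuoZu} as the paper does, or replace the pairing step by an argument that actually shows the half-period sum vanishes at $\zeta$ for generic $a$.
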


We also need to establish the following parametric generalization of Theorem \ref{thm:main-2}.
\begin{thm}
Let $n>1$ be an odd integer and let $r\geqslant 1$. Then, modulo
$$
[n^r]\prod_{j=0}^{(n^{r-1}-1)/d}(1-aq^{(2j+1)n})(a-q^{(2j+1)n}),
$$
we have
\begin{align}
&\sum_{k=0}^{(n^r-1)/d} q^{k^2}[6k+1]\frac{(aq;q^2)_k (q/a;q^2)_k (q^2;q^4)_k }{(aq^4;q^4)_k(q^4/a;q^4)_k(q^4;q^4)_k}\notag\\[5pt]
&\quad\equiv (-q)^{(1-n)/2} [n] \sum_{k=0}^{(n^{r-1}-1)/d}
q^{k^2}[6k+1]_{q^2}\frac{(aq^n;q^{2n})_k (q^n/a;q^{2n})_k (q^{2n};q^{4n})_k }{(aq^{4n};q^{4n})_k(q^{4n}/a;q^{4n})_k(q^{4n};q^{4n})_k},  \label{eq:main-2a}
\end{align}
where $d=1,2$.
\end{thm}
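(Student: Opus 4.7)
The plan is to mirror the proof of the parametric version of Theorem~\ref{thm:main} one-for-one, substituting the pair \eqref{eq:6k+1-1}--\eqref{eq:6k+1-2} for the pair Lemma~\ref{lem:1}--Lemma~\ref{lem:2} at every step. Concretely, I would first argue that both sides of \eqref{eq:main-2a} are divisible by $[n^r]$. Applying \eqref{eq:6k+1-1} with $n\mapsto n^r$ handles the left-hand side. For the right-hand side, \eqref{eq:6k+1-1} with $r\mapsto r-1$ and $q\mapsto q^n$ gives divisibility by $[n^{r-1}]_{q^n}$; combined with the factor $[n]$ out front and the observation (as in the previous proof) that $[n]$ is coprime to the denominators of the inner sum, one concludes divisibility by $[n][n^{r-1}]_{q^n}=[n^r]$.

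Next I would verify that both sides of \eqref{eq:main-2a} agree as rational functions of $a$ whenever $a=q^{\pm(2j+1)n}$ for $0\leqslant j\leqslant (n^{r-1}-1)/d$, i.e.\ at the roots of \eqref{eq:prod}. Because the truncation index $(n^r-1)/d$ is at least $((2j+1)n-1)/2$ and $(q^{1-(2j+1)n};q^2)_k=0$ for $k>((2j+1)n-1)/2$, the left-hand side collapses via \eqref{eq:6k+1-2} (with $n\mapsto (2j+1)n$) to $(-q)^{(1-(2j+1)n)/2}[(2j+1)n]$. For the right-hand side, I would set $Q=q^n$ and apply \eqref{eq:6k+1-2} with $q\mapsto Q$, $n\mapsto 2j+1$ to obtain $(-Q)^{-j}[2j+1]_Q$ for the inner sum; multiplying by the prefactor $(-q)^{(1-n)/2}[n]$ and using $(-q)^{(1-n)/2}(-q^n)^{-j}=(-q)^{(1-(2j+1)n)/2}$ (which uses that $n$ is odd) together with $[n][2j+1]_{q^n}=[(2j+1)n]$, yields the same value as on the left. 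Symmetry $a\leftrightarrow 1/a$ of both sides then handles $a=q^{(2j+1)n}$.

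Since $[n^r]=\prod_{j=1}^r\Phi_{n^j}(q)\cdot(\text{other cyclotomics})$ is coprime to the product \eqref{eq:prod} (none of the roots $q^{\pm(2j+1)n}$ are $n^j$-th primitive roots of unity for $j\geqslant 1$, as $n$ is odd), the two modulus pieces can be combined by CRT to give the full modulus in the statement. I expect the only subtle step to be the bookkeeping in the second paragraph: making sure the sign $(-q)^{(1-n)/2}$ combines correctly with the $j$-dependent factor, and that the factor of $[n]$ in front of the reduced sum together with the value of that reduced sum genuinely reproduces $(-q)^{(1-(2j+1)n)/2}[(2j+1)n]$. Once this matching is confirmed at all $2(\lfloor(n^{r-1}-1)/d\rfloor+1)$ specialization points, the theorem follows exactly as in the proof of \eqref{eq:main-a}.
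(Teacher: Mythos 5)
Your proposal is correct and follows essentially the same route as the paper's own (sketched) proof: divisibility of both sides by $[n^r]=[n][n^{r-1}]_{q^n}$ via \eqref{eq:6k+1-1}, equality of both sides at $a=q^{\pm(2j+1)n}$ via the identity \eqref{eq:6k+1-2} applied with $n\mapsto(2j+1)n$ on the left and with $q\mapsto q^n$, $n\mapsto 2j+1$ on the right, and then combining the two coprime modulus pieces; your sign and $q$-power bookkeeping, using $[n][2j+1]_{q^n}=[(2j+1)n]$ and the oddness of $n$, checks out. In fact your write-up is more explicit than the paper's sketch, which even cites \eqref{eq:6k+1-1} where the evaluation step really uses \eqref{eq:6k+1-2}, exactly as you do.
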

\begin{proof}[Sketch of proof]
By \eqref{eq:6k+1-1}, we see that both sides of \eqref{eq:main-2a} are congruent to $0$ modulo $[n^r]=[n][n^{r-1}]_{q^n}$.
Thus, the congruence \eqref{eq:main-2a} holds modulo $[n^r]$.
To prove that \eqref{eq:main-2a} also holds modulo \eqref{eq:prod}, it suffices to show that both sides of \eqref{eq:main-2a}
are identical when  $a=q^{-(2j+1)n}$ or $a=q^{(2j+1)n}$ for all $0\leqslant j\leqslant (n^{r-1}-1)/d$.
We may accomplish this by applying \eqref{eq:6k+1-1} on both sides of \eqref{eq:main-2a} with $a=q^{-(2j+1)n}$.
\end{proof}

\begin{proof}[Proof of Theorem {\rm\ref{thm:main-2}}]
The proof is similar to that of Theorem \ref{thm:main}. Letting $a\to 1$ in \eqref{eq:main-2a},
we conclude that \eqref{q-j3-1} is true modulo $\prod_{j=1}^r\Phi_{n^j}(q)^3$.
On the other hand, by \cite[Theorem 1.3]{Guo-J} or \cite[Theorem 4.3]{GuoZu}, we have
\begin{align*}
\sum_{k=0}^{(n-1)/2} q^{k^2}[6k+1]\frac{(q;q^2)_k^2 (q^2;q^4)_k }{(q^4;q^4)_k^3}
\equiv \sum_{k=0}^{n-1} q^{k^2}[6k+1]\frac{(q;q^2)_k^2 (q^2;q^4)_k }{(q^4;q^4)_k^3}\equiv 0\pmod{[n]}.
\end{align*}
Applying the above congruences, we immediately conclude that both sides of \eqref{q-j3-1} and \eqref{q-j3-2} are all
congruent to $0$ modulo $[n^r]$. This completes the proof.
\end{proof}

\section{Concluding remarks}
We have seen that supercongruences can be proved by establishing suitable $q$-analogues of them.
As mentioned in \cite{GuoZu}, $q$-supercongruences have many virtues
than usual supercongruences. For example, the creative microscoping method used to prove $q$-supercongruences cannot be transformed into a method used to prove usual supercongruences.
Although we have only proved \eqref{eq:c3}, \eqref{eq:j3}, \eqref{eq:cc} and \eqref{eq:jj} modulo $p^{3r}$
instead of expected $p^{4r}$,
no proofs were known before for such congruences even modulo $p^{2r}$.

Note that a complete $q$-analogue of Swisher \cite[(I.3)]{Swisher} has already been given by the author \cite{Guo2};
it has a somewhat different favour.
It is also possible to give $q$-analogues or partial $q$-analogues of some other supercongruences conjectured by Swisher in \cite{Swisher};
we plan to explore them in the near future.

In this context we also need to highlight that certain
supercongruences are related to the coefficients of modular forms.
One famous example is the supercongruence (see Van Hamme \cite[(M.2)]{Hamme} and Kilbourn \cite{Kilbourn})
$$
\sum_{k=0}^{p-1}\frac{(\frac12)_k^4}{k!^4}
\equiv\sum_{k=0}^{(p-1)/2}\frac{(\frac12)_k^4}{k!^4}
\equiv\gamma_p\pmod{p^3}
$$
valid for an odd prime $p$, where $\gamma_p$ stands for the $p$-th coefficient in the $q$-expansion
$$
q(q^2;q^2)_\infty^4(q^4;q^4)_\infty^4=\sum_{n=1}^\infty\gamma_nq^n\quad \text{(of a modular form)}.
$$
Although Swisher \cite{Swisher} did not give a generalization of the (M.2) supercongruence,
Long et al.\ \cite[Section~2.1]{LTYZ17} showed that the proof of the (M.2) supercongruence is equivalent to verifying that
\begin{align}
\sum_{k=0}^{p^{r+1}-1}\frac{(\frac12)_k^4}{k!^4}
\equiv \left(\sum_{k=0}^{p-1}\frac{(\frac12)_k^4}{k!^4}\right) \left(\sum_{k=0}^{p^{r}-1}\frac{(\frac12)_k^4}{k!^4}\right)
\pmod{p^3}  \label{eq:ltyz-1}
\end{align}
holds
for $r=1$ and $2$ (hence for all $r\geqslant 1$).  Recently, the author and Zudilin \cite{GuoZu} suspected that a $q$-analogue of the supercongruences \eqref{eq:ltyz-1}
should exist. Here we formulate such a $q$-analogue.
\begin{conj}
Let $n>1$ be an odd integer and let $r\geqslant 1$. Then, modulo $\Phi_n(q)^3$,
\begin{align}
\sum_{k=0}^{n^{r+1}-1}\frac{(q;q^2)_k^4 }{(q^2;q^2)_k^4 }q^{2k}
\equiv
\left(\sum_{k=0}^{n-1}\frac{(q;q^2)_k^4 }{(q^2;q^2)_k^4 }q^{2k}\right)
\left(\sum_{k=0}^{n^r-1}\frac{(q^{n^2};q^{2n^2})_k^4 }{(q^{2n^2};q^{2n^2})_k^4 }q^{2n^2k}\right).  \label{eq:q-ltyz-1}
\end{align}
\end{conj}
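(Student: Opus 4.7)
My plan is to adapt the creative microscoping method employed for Theorems~\ref{thm:main} and \ref{thm:main-2}. Define the one-parameter deformation
\begin{align*}
T_N(a,q) := \sum_{k=0}^{N-1} \frac{(aq;q^2)_k\,(q/a;q^2)_k\,(q;q^2)_k^2}{(aq^2;q^2)_k\,(q^2/a;q^2)_k\,(q^2;q^2)_k^2}\,q^{2k},
\end{align*}
which reduces to the truncated sums of \eqref{eq:q-ltyz-1} when $a=1$ (applied to $q$ or to $q^{n^2}$). The natural target is the parametric $q$-congruence
\begin{align*}
T_{n^{r+1}}(a,q)\equiv T_n(a,q)\,T_{n^r}(a,q^{n^2}) \pmod{\Phi_n(q)\,(1-aq^n)(a-q^n)}.
\end{align*}
Letting $a\to 1$ turns $(1-aq^n)(a-q^n)$ into $(1-q^n)^2$, which contains $\Phi_n(q)^2$ as a factor, and together with the ambient $\Phi_n(q)$ this yields the desired modulus $\Phi_n(q)^3$.

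The congruence would then be split by the Chinese remainder theorem into two independent checks. Modulo $(1-aq^n)(a-q^n)$, it suffices to verify the identity at $a=q^{-n}$ (and, by the $a\leftrightarrow 1/a$ symmetry of $T_N(a,q)$, at $a=q^n$). At $a=q^{-n}$ the factor $(q^{1-n};q^2)_k$ vanishes for $k>(n-1)/2$, so both $T_{n^{r+1}}(q^{-n},q)$ and $T_n(q^{-n},q)$ collapse to the same finite sum, and the identity reduces to the auxiliary claim that $T_{n^r}(q^{-n},q^{n^2})\equiv 1$ modulo the relevant factor. Modulo $\Phi_n(q)$, I would prove a $q$-Lucas-type block decomposition of the summand: writing $k=jn+\ell$ with $0\leq\ell<n$,
\begin{align*}
\frac{(aq;q^2)_k(q/a;q^2)_k(q;q^2)_k^2}{(aq^2;q^2)_k(q^2/a;q^2)_k(q^2;q^2)_k^2}\,q^{2k} \equiv U_\ell(a,q)\,V_j(a,q^{n^2}) \pmod{\Phi_n(q)},
\end{align*}
where $U_\ell(a,q)$ and $V_j(a,q^{n^2})$ denote the $\ell$-th and $j$-th summands of $T_n(a,q)$ and $T_{n^r}(a,q^{n^2})$ respectively; summing over $j$ and $\ell$ would then factor the double sum into the desired product.

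The main obstacle, I expect, will be pinning down the correct parametric ansatz. Unlike the sums with $[4k+1]$ or $[6k+1]$ handled in Sections~2 and~3, the present summand is not divisible by $[n]$, so the $a=q^{\pm n}$ specializations must yield \emph{genuine} evaluations rather than merely vanishing statements. The auxiliary identity $T_{n^r}(q^{-n},q^{n^2})\equiv 1$ is not self-evident, and a remedy may require either a new terminating basic hypergeometric summation or the introduction of a second parameter $b$ that decouples the two factors of the right-hand side. The $q$-Lucas step is similarly delicate because, at $q$ a primitive $n$-th root of unity, the individual summands of $T_{n^{r+1}}$ do not vanish, so a careful cancellation between the Pochhammers of consecutive blocks of length $n$ is needed to isolate the Frobenius variable $q^{n^2}$; once this is achieved, the final CRT assembly and the limit $a\to 1$ would follow the template already used for Theorem~\ref{thm:main}.
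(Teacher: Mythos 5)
The statement you are trying to prove is not a theorem of the paper but its first conjecture in Section 4: the author explicitly remarks that no parametric generalization of \eqref{eq:q-ltyz-1} could be found, which is exactly why the creative microscoping method is not applied there and the congruence is left open. Your text is accordingly a plan rather than a proof, and each of its load-bearing steps is missing. First, the specialization step: unlike the sums in Theorems~\ref{thm:main} and \ref{thm:main-2}, whose summands carry the factors $[4k+1]$ or $[6k+1]$ and therefore admit closed evaluations at $a=q^{\pm(2j+1)n}$ via Jackson's ${}_6\phi_5$ (Lemma~\ref{lem:2}) or Rahman's formula, your $T_N(a,q)$ has no such evaluation; the auxiliary claim $T_{n^r}(q^{-n},q^{n^2})\equiv 1$ is not merely ``not self-evident'' but false as an identity, since at $a=q^{-n}$ the numerator factors $(q^{n^2-n};q^{2n^2})_k$ never vanish and the sum has genuinely nonzero terms for every $k\geqslant 1$; you would need it as a congruence modulo a precisely specified cofactor, which you neither formulate nor prove. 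Second, the $q$-Lucas-type block factorization modulo $\Phi_n(q)$ with Frobenius variable $q^{n^2}$ is asserted without any argument, and producing $q^{n^2}$ (rather than $q^{n}$) in the inner sum is precisely the delicate feature of \eqref{eq:q-ltyz-1} that distinguishes it from the partial analogue \eqref{eq:q-ltyz-3}.

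There is also a structural flaw in the ansatz itself. In the proof of Theorem~\ref{thm:main}, the parametric modulus \eqref{eq:prod} contains about $2n^{r-1}$ linear factors in $a$ precisely so that its $\Phi_n(q)$-multiplicity in the limit $a\to 1$ exceeds, by $2$, the multiplicity contributed by the denominators $(aq^2;q^2)_k(q^2/a;q^2)_k$ of the summands; this surplus is what survives the limit. In your proposed congruence the modulus carries only the single pair $(1-aq^n)(a-q^n)$, contributing multiplicity $2$ at $a=1$, while the denominators of $T_{n^{r+1}}(a,q)$, with $k$ running up to $n^{r+1}-1$, acquire $\Phi_n(q)$-multiplicity of order $2(n^{r}-1)$ as $a\to 1$. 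Hence the limiting argument yields nothing modulo $\Phi_n(q)^3$ for $r\geqslant 1$ and $n>1$: to make the microscoping template work one would need a modulus with on the order of $n^{r}$ factor pairs, and then equality (or controlled congruence) of both sides at all the corresponding points $a=q^{\pm(2j+1)n}$, which is exactly the parametric generalization the paper says it could not find. As it stands, the proposal identifies the right general strategy but leaves every essential ingredient unproven, and its quantitative setup would have to be changed substantially before the $a\to 1$ step could deliver the modulus $\Phi_n(q)^3$.
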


By the Lucas theorem, one sees that $(\frac12)_k/k!={2k\choose k}/4^k\equiv 0\pmod{p}$ for $k$ in the range $(p^s+1)/2\leqslant k\leqslant p^s-1$ where $s=1,2,\ldots.$
Thus, the supercongruence \eqref{eq:ltyz-1} has the following equivalent form:
\begin{align}
\sum_{k=0}^{(p^{r+1}-1)/2}\frac{(\frac12)_k^4}{k!^4}
\equiv \left(\sum_{k=0}^{(p-1)/2}\frac{(\frac12)_k^4}{k!^4}\right) \left(\sum_{k=0}^{(p^{r}-1)/2}\frac{(\frac12)_k^4}{k!^4}\right)
\pmod{p^3}.  \label{eq:ltyz-2}
\end{align}
Likewise, we have the following natural $q$-analogue of \eqref{eq:ltyz-2}.
\begin{conj}
Let $n>1$ be an odd integer and let $r\geqslant 1$. Then, modulo $\Phi_n(q)^3$,
\begin{align}
\sum_{k=0}^{(n^{r+1}-1)/2}\frac{(q;q^2)_k^4 }{(q^2;q^2)_k^4 }q^{2k}
\equiv
\left(\sum_{k=0}^{(n-1)/2}\frac{(q;q^2)_k^4 }{(q^2;q^2)_k^4 }q^{2k}\right)
\left(\sum_{k=0}^{(n^r-1)/2}\frac{(q^{n^2};q^{2n^2})_k^4 }{(q^{2n^2};q^{2n^2})_k^4 }q^{2n^2k}\right).  \label{eq:q-ltyz-2}
\end{align}
\end{conj}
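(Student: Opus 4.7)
The plan is to employ the creative microscoping method. Introduce a parameter $a$ and define parametric extensions $L(a,q)$ and $R(a,q)$ of the two sides of \eqref{eq:q-ltyz-2}, obtained by replacing two of the four $(q;q^2)_k/(q^2;q^2)_k$ blocks on each side by $(aq;q^2)_k(q/a;q^2)_k/\bigl((aq^2;q^2)_k(q^2/a;q^2)_k\bigr)$ (and analogously under $q\mapsto q^{n^2}$ in the inner factor of $R$). Both reduce to the two sides of \eqref{eq:q-ltyz-2} at $a=1$. The target is the parametric congruence
$$L(a,q)\equiv R(a,q)\pmod{\Phi_n(q)(1-aq^n)(a-q^n)};$$
since $(1-aq^n)(a-q^n)\bigr|_{a=1}=(1-q^n)^2$ is divisible by $\Phi_n(q)^2$, specialising $a\to 1$ will then yield the desired modulus $\Phi_n(q)^3$.

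The parametric congruence would be split into two parts. First I would establish the equality $L(a,q)=R(a,q)$ at $a=q^{\pm n}$. At $a=q^{-n}$ the factor $(q^{1-n};q^2)_k$ forces every term with $k\geqslant(n+1)/2$ to vanish, so $L(q^{-n},q)$ truncates to a sum over $0\leqslant k\leqslant(n-1)/2$ that coincides with the first factor of $R(q^{-n},q)$; verifying equality then amounts to a $q$-hypergeometric identity relating the second factor of $R$ to the ``tail'' of $L$, for which one would look to Sears', Watson's, or Bailey-chain transformations of very-well-poised ${}_{8}\phi_{7}$ series. Second, I would prove $L(a,q)\equiv R(a,q)\pmod{\Phi_n(q)}$ by a $\Phi_n$-valuation count: writing the summation index as $k=jn+b$ with $0\leqslant b\leqslant n-1$, the summands with $(n+1)/2\leqslant b\leqslant n-1$ have $\Phi_n$-valuation exactly $4$ and drop out, while those with $0\leqslant b\leqslant(n-1)/2$ factorise via the $q$-Lucas theorem for the central $q$-binomial coefficient $\binom{2k}{k}_q$ into a $b$-part and a $j$-part that match the two factors on the right. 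Coprimality of $\Phi_n(q)$ and $(1-aq^n)(a-q^n)$ in $\mathbb{Z}[a,q]$ then assembles the pieces into the full parametric congruence.

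The main obstacle is the identity at $a=q^{\pm n}$. Unlike the sums with prefactor $(4k+1)$ or $(6k+1)$, which telescope via Jackson's ${}_6\phi_5$ summation (as exploited in Lemmas~\ref{lem:1} and \ref{lem:2}), the unweighted series $\sum_k(q;q^2)_k^4/(q^2;q^2)_k^4\,q^{2k}$ has no known closed-form evaluation, so even after specialisation one must compare two parametric sums of genuinely different structure; indeed, naively evaluating the second factor of $R$ at $a=q^{\pm n}$ already disagrees with $1$ at $q=1$, so the required identity is a subtle balance rather than a trivial summation. Resolving it may call for a two-parameter microscoping (e.g.\ replacing $(1-aq^n)(a-q^n)$ by $(1-aq^n)(1-bq^n)$), an induction on $r$, or a hybrid with a direct refined-$q$-Lucas expansion modulo $\Phi_n(q)^3$ that tracks the second-order deformation encoded by the shift $q\mapsto q^{n^2}$.
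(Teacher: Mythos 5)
This statement is not proved in the paper at all: it is stated as an open conjecture, and the author explicitly remarks in Section 4 that no parametric generalization of \eqref{eq:q-ltyz-1}, \eqref{eq:q-ltyz-2} and \eqref{eq:q-ltyz-3} has been found, ``which makes it difficult to use the creative microscoping method here.'' Your proposal is therefore not being measured against an existing argument, and as written it is a strategy outline rather than a proof. Its decisive step --- an identity $L(a,q)=R(a,q)$ at $a=q^{\pm n}$ --- is precisely the missing ingredient: unlike the sums with the linear weights $[4k+1]$ or $[6k+1]$, which are summable by Jackson's ${}_6\phi_5$ formula (this is what Lemmas \ref{lem:1} and \ref{lem:2} and their analogues rest on), the unweighted series here has no known closed-form evaluation, and you acknowledge yourself that the specialised equality is ``a subtle balance'' you cannot currently establish. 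Until that identity (or some substitute for it) is supplied, the argument does not close.

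There is also a quantitative flaw in the microscoping framework you set up. A congruence modulo $\Phi_n(q)(1-aq^n)(a-q^n)$ does not transfer to a congruence modulo $\Phi_n(q)^3$ at $a=1$ by mere specialisation: the parametric sums are rational functions of $a$, and their denominators $(aq^2;q^2)_k(q^2/a;q^2)_k$ with $k$ up to $(n^{r+1}-1)/2$ acquire many factors of $\Phi_n(q)$ in the limit $a\to 1$ (roughly $n^r-1$ of them), which must be offset by the $\Phi_n(q)$-content of the limiting modulus. This is exactly why the paper's proofs of Theorems \ref{thm:main} and \ref{thm:main-2} take the modulus to be $[n^r]\prod_{j=0}^{(n^{r-1}-1)/d}(1-aq^{(2j+1)n})(a-q^{(2j+1)n})$ and then carefully compare the cyclotomic content of this product with that of the denominators before letting $a\to1$. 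To make your scheme work you would need the full product over $j$, hence equalities of the two parametric sides at every point $a=q^{\pm(2j+1)n}$ --- a whole family of identities for a series with no ${}_6\phi_5$-type evaluation, none of which are available. So the gap is not a technical detail but the same obstruction the author identifies; the conjecture remains open, and your sketch does not resolve it.
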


We should mention that the $q$-congruences \eqref{eq:q-ltyz-1} and \eqref{eq:q-ltyz-2} are not equivalent to each other.
This is because the left-hand sides of \eqref{eq:q-ltyz-1} and \eqref{eq:q-ltyz-2} are not congruent to each other even modulo $\Phi_n(q)$.
(Instead, they are congruent to each other modulo $\Phi_{n^{r+1}}(q)^4$.)

Finally, we propose the following partial $q$-analogues of \eqref{eq:ltyz-1} and \eqref{eq:ltyz-2}.
\begin{conj}
Let $n>1$ be an odd integer and let $r\geqslant 1$. Then, modulo $\Phi_n(q)^2$,
\begin{align}
\sum_{k=0}^{(n^{r+1}-1)/d}\frac{(q;q^2)_k^4 }{(q^2;q^2)_k^4 }q^{2k}
\equiv
\left(\sum_{k=0}^{(n-1)/d}\frac{(q;q^2)_k^4 }{(q^2;q^2)_k^4 }q^{2k}\right)
\left(\sum_{k=0}^{(n^r-1)/d}\frac{(q^{n};q^{2n})_k^4 }{(q^{2n};q^{2n})_k^4 }q^{2nk}\right)  \label{eq:q-ltyz-3}
\end{align}
for $d=1,2$.
\end{conj}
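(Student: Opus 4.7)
The plan is to adapt the creative microscoping technique employed throughout the paper. I would introduce a parameter $a$ and define the parametric sums
\begin{align*}
L(a) &= \sum_{k=0}^{(n^{r+1}-1)/d} \frac{(aq;q^2)_k\,(q/a;q^2)_k\,(q;q^2)_k^2}{(aq^2;q^2)_k\,(q^2/a;q^2)_k\,(q^2;q^2)_k^2}\,q^{2k}, \\
R(a) &= \left(\sum_{k=0}^{(n-1)/d} \frac{(aq;q^2)_k\,(q/a;q^2)_k\,(q;q^2)_k^2}{(aq^2;q^2)_k\,(q^2/a;q^2)_k\,(q^2;q^2)_k^2}\,q^{2k}\right) \\
&\quad\times \left(\sum_{k=0}^{(n^r-1)/d} \frac{(aq^n;q^{2n})_k\,(q^n/a;q^{2n})_k\,(q^n;q^{2n})_k^2}{(aq^{2n};q^{2n})_k\,(q^{2n}/a;q^{2n})_k\,(q^{2n};q^{2n})_k^2}\,q^{2nk}\right),
\end{align*}
both of which reduce to the two sides of \eqref{eq:q-ltyz-3} at $a = 1$. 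The main intermediate claim would be the parametric $q$-congruence
$$L(a) \equiv R(a) \pmod{(1-aq^n)(a-q^n)}.$$
Since $(1-aq^n)(a-q^n)\big|_{a=1} = (1-q^n)^2$ contains $\Phi_n(q)^2$ as a factor, the desired congruence would follow upon sending $a \to 1$.

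To establish the parametric congruence, I would verify $L(a) = R(a)$ at the two points $a = q^{-n}$ and $a = q^n$. At $a = q^{-n}$, the factor $(aq;q^2)_k = (q^{1-n};q^2)_k$ vanishes as soon as $k \geq (n+1)/2$; hence in $L(q^{-n})$ only summands with $k \leq (n-1)/2$ contribute, and the first factor of $R(q^{-n})$ truncates identically (the upper limit $(n-1)/d$ being $\geq (n-1)/2$ for both $d = 1$ and $d = 2$). In the second factor of $R(q^{-n})$, the term $(aq^n;q^{2n})_k = (1;q^{2n})_k$ annihilates every summand with $k \geq 1$, so that factor collapses to the $k = 0$ term, which equals $1$. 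Consequently $L(q^{-n})$ and $R(q^{-n})$ coincide, both being equal to
$$\sum_{k=0}^{(n-1)/2} \frac{(q^{1-n};q^2)_k(q^{1+n};q^2)_k(q;q^2)_k^2}{(q^{2-n};q^2)_k(q^{2+n};q^2)_k(q^2;q^2)_k^2}\,q^{2k}.$$
The case $a = q^n$ is immediate by the manifest $a \leftrightarrow 1/a$ symmetry of both $L$ and $R$, which permutes $(aq;q^2)_k$ with $(q/a;q^2)_k$ (and similarly at the $q^n$-scale).

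The main obstacle I anticipate is the descent to $a = 1$. Because the summation range in $L(a)$ reaches $(n^{r+1}-1)/d$, the denominators $(aq^2;q^2)_k(q^2/a;q^2)_k(q^2;q^2)_k^2$ acquire factors of $\Phi_n(q)$ at $a = 1$ once $k \geq n$, and these could in principle cancel the $\Phi_n(q)^2$ supplied by $(1-q^n)^2$. Writing $L(a) - R(a) = (1-aq^n)(a-q^n)\,M(a)/D(a)$ with $M, D$ coprime to $(1-aq^n)(a-q^n)$, the proof is completed only after verifying the $\Phi_n(q)$-valuation bound $v_{\Phi_n(q)}(M(1)) \geq v_{\Phi_n(q)}(D(1))$. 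I expect this bookkeeping to proceed in the spirit of the denominator analyses in Sections 2 and 3 — using identities such as $(q^n;q^{2n})_k/(q^{2n};q^{2n})_k = {2k\brack k}_{q^n}/(-q^n;q^n)_k^2$ to isolate the cyclotomically invertible parts — but since the range of summation grows with $r$, carrying out the analysis uniformly in $r$ is the step I expect to be most delicate.
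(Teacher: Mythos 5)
You should first note that the paper does not prove this statement at all: it is left as a conjecture, and the author explicitly remarks that no parametric generalization of \eqref{eq:q-ltyz-3} could be found, which is precisely what ``makes it difficult to use the creative microscoping method here.'' Your proposal runs head-on into that difficulty and does not overcome it. The parametric congruence you state, $L(a)\equiv R(a)\pmod{(1-aq^n)(a-q^n)}$, is indeed correct (your evaluations at $a=q^{\pm n}$ are fine: both sides collapse to the same truncated ${}_4\phi_3$-type sum, the second factor of $R$ degenerating to $1$ because of $(1;q^{2n})_k$). But this single factor pair is far too weak to yield anything at $a=1$. Writing $L(a)-R(a)=(1-aq^n)(a-q^n)M(a)/D(a)$, the denominator $D(1)$ coming from $(aq^2;q^2)_{(n^{r+1}-1)/d}(q^2/a;q^2)_{(n^{r+1}-1)/d}$ carries $\Phi_n(q)$ to a power of order $2n^{r}/d$, while the modulus supplies only $\Phi_n(q)^2$. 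The inequality $v_{\Phi_n(q)}(M(1))\geqslant v_{\Phi_n(q)}(D(1))$ that you defer as ``bookkeeping'' is therefore not a routine denominator analysis in the spirit of Sections 2 and 3 --- it is the entire content of the conjecture, and nothing in your argument addresses it.

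Compare with how the proofs of Theorems \ref{thm:main} and \ref{thm:main-2} actually work: there the parametric congruence holds modulo the \emph{large} product $\prod_{j=0}^{(n^{r-1}-1)/d}(1-aq^{(2j+1)n})(a-q^{(2j+1)n})$, because the ${}_6\phi_5$ evaluation of Lemma \ref{lem:2} (and its cubic analogue \eqref{eq:6k+1-2}) makes both sides equal at \emph{every} point $a=q^{\pm(2j+1)n}$; the $a\to1$ limit of that product then contributes $\Phi_{n^j}(q)$ to exponents $2n^{r-j}$ (resp.\ $n^{r-j}+1$), strictly exceeding the exponents $2n^{r-j}-2$ (resp.\ $n^{r-j}-1$) coming from the denominators, and only this surplus gives the final congruence. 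For your $L(a)$ and $R(a)$ the agreement fails already at $a=q^{\pm 3n}$: the right-hand side is a product of two sums rather than a single well-poised series, so there is no closed evaluation forcing equality at the higher points, and hence no large modulus to outweigh the denominators. Without a genuinely new idea (a different parametrization, or a non-microscoping argument), the proposal does not prove \eqref{eq:q-ltyz-3}; it only re-derives the first, easy step and leaves the conjecture where the paper leaves it.
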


So far we did not find any parametric generalizations of \eqref{eq:q-ltyz-1}, \eqref{eq:q-ltyz-2} and \eqref{eq:q-ltyz-3}.
This makes it difficult to use the creative microscoping method here.
In any case, it is the first time when a $q$-version of \eqref{eq:ltyz-1}
is given, though conjectural. We hope that an interested reader will make some progress on it.

\vskip 5mm \noindent{\bf Acknowledgment.} The author thanks Wadim Zudilin for many valuable comments on a previous version of this manuscript.
This work was partially supported by the National Natural Science Foundation of China (grant 11771175).

\end{document}